\newcommand*\circled[1]{\tikz[baseline=(C.base)]\node[draw,circle,inner sep=1.2pt,line width=0.2mm,](C) {#1};}
\theoremstyle{plain}
\newtheorem{thm}{Theorem}[section]
\newtheorem{prop}[thm]{Proposition}
\newtheorem{cor}[thm]{Corollary}
\newtheorem{lem}[thm]{Lemma}
\theoremstyle{definition}
\newtheorem{defn}[thm]{Definition}
\newtheorem{ex}[thm]{Example}
\newtheorem{rem}[thm]{Remark}
\newcommand{\Z}{\mathbb{Z}}
\newcommand{\pd}{{\rm pd}}
\newcommand{\lcm}{\mathrm{lcm}}
\newcommand{\gr}{\mathrm{gr}}
\title{The minimal cellular resolutions of the edge ideals of forests}
\author{Margherita Barile, Antonio Macchia}
\address{{\small Margherita Barile, Dipartimento di Matematica, Università degli Studi di Bari Aldo Moro, Via Orabona 4, 70125 Bari, Italy}}
\email{{\small margherita.barile@uniba.it}}
\address{{\small Antonio Macchia, Fachbereich Mathematik und Informatik, Freie Universit\"at Berlin, Arnimallee 2, 14195 Berlin, Germany}}
\email{{\small macchia.antonello@gmail.com, macchia@zedat.fu-berlin.de}}
\begin{document}
\begin{abstract}
We present an explicit construction of a minimal cellular resolution for the edge ideals of forests, based on discrete Morse theory. In particular, the generators of the free modules are subsets of the generators of the modules in the Lyubeznik resolution. This procedure allows to ease the computation of the graded Betti numbers and the projective dimension.
\end{abstract}
\maketitle
\noindent {\bf Mathematics Subject Classification (2010):} 13A15, 13C10, 13D02, 05C05.

\noindent {\bf Keywords:} Cellular resolutions, edge ideals, forests, discrete Morse theory.

\section{Introduction}
Cellular resolutions (free resolutions supported by a CW-complex) were introduced for monomial modules by Bayer and Sturmfels in \cite{BS}. Since then, they have been studied by several authors (among others, \cite{AFG}, \cite{BW}, \cite{DE}, \cite{EN}, \cite{F}, \cite{H}, \cite{M10}, \cite{N}, \cite{V}). In some cases they turn out to be minimal (see, e.g., \cite{BW} for generic and shellable monomial modules, \cite{EN} for the powers of the edge ideals of paths, \cite{M10} for the well-known Eliahou-Kervaire resolution for stable ideals,   \cite{N} for the matroid ideal of a finite projective space). We also know, however, that a minimal cellular free resolution need not exist: a class of counterexamples was provided by Velasco in \cite{V}. A special type of (regular) cellular resolutions are the simplicial resolutions, first considered by Bayer, Peeva and Sturmfels \cite{BPS}. Two famous (in general, non minimal) examples for monomial ideals are the \textit{Taylor resolution} \cite{T} and its refinement called the \textit{Lyubeznik resolution} \cite{L}: an essential overview of the topic is contained in \cite{M11}. In \cite{BW}, Batzies and Welker used Chari's  reformulation \cite{C} of Forman's discrete Morse Theory \cite{Fo} as a tool for obtaining new cellular resolutions from the Taylor complex (they showed that the Lyubeznik resolution can be derived in this way). Later on, \`Alvarez Montaner, Fern\'andez-Ramos and Gimenez \cite{AFG} applied similar techniques for developing an algorithmic transformation (called \textit{pruning}) of the Taylor resolution, which, however, does not always produce a minimal resolution.

In our paper we present an explicit discrete Morse theoretical construction of a minimal cellular free resolution for any edge ideal of a forest, i.e., for any ideal in a polynomial ring over a field whose generators are the squarefree quadratic monomials corresponding to the edges of a (simple, undirected) acyclic graph. The sets of generators of the free modules are explicitly described as special subsets of the sets of generators of the modules in the Lyubeznik resolution. They can be determined in two ways: we present both a selection procedure (see the steps (I)-(V) in Section 3) and a combinatorial characterization (see Proposition \ref{reduced-gaps}). These two methods relevantly facilitate the computation of the (multi)graded Betti numbers and the projective dimension, and, in particular, they allow a transparent constructive approach to some of the formulas given by Jacques \cite{J} and Kimura \cite{K12}. Also note that our method is direct, not recursive, and totally different and independent with respect to the one developed for the quadratic monomial ideals considered by Horwitz \cite{H}.

Since, for edge ideals, the minimal free resolutions are additive with respect to the connected components of the graph, we can restrict our attention to the edge ideals of trees.

\section{Preliminaries}
Let $T$ be a tree on the vertex set $\{x_1,\dots, x_n\}$, which is a set of indeterminates over the field $K$. Let $R=K[x_1,\dots, x_n]$ and let $I=I(T)\subset R$ be the \textit{edge ideal} of $T$, i.e., the ideal generated by the (edge) monomials $x_ix_j$ such that $\{x_i, x_j\}$ (also denoted by $x_ix_j$) is an edge of $T$.

Fix a vertex, say $x_1$. Let $d$ be the maximum distance that a vertex of $T$ can have from $x_1$. For all $i=0,\dots, d$, call $x_1^{(i)},\dots,x_{s_i}^{(i)}$ the vertices lying at distance $i$ from $x_1$ (so that, in particular, $s_0=1$ and $x_1^{(0)}=x_1$). We will say that these vertices have \textit{rank} $i$: each of these vertices is connected to $x_1$ by a (unique) path of length $i$ (formed by $i+1$ vertices).

Neighbours always have consecutive ranks. In the dual graph $\overline{T}$, let $((i)_p, (i+1)_q)$ denote the vertex that, in $\overline{T}$, corresponds to the edge $x_p^{(i)}x_q^{(i+1)}$ of $T$.  The vertices of $\overline{T}$ of the form $((i)_p,-)$ or $(-, (i)_p)$ form a non-empty complete subgraph of $\overline{T}$, whose vertices represent the edges of $T$ that contain the vertex $x_p^{(i)}$. It will be called a $K$-\textit{subgraph} of $\overline{T}$, denoted by $K^{(i)}_p$, and $(i)_p$ will be called its \textit{index}. Similar considerations apply to $(i+1)_q$. The $K$-subgraphs are the maximal non-empty complete subgraphs of $\overline{T}$. Note that, if $x^{(i)}_px^{(i+1)}_q$ is a non-terminal edge monomial, then $((i)_p, (i+1)_q)$ is the only common vertex of $K^{(i)}_p$ and $K^{(i+1)}_q$.

\begin{rem}\label{unique} Let $i\geq 0$. For every index $(i+1)_q$ there is exactly one index of the form $(i)_p$ such that $((i)_p, (i+1)_q)$ is a vertex of $\overline{T}$ (i.e., such that $x^{(i)}_px^{(i+1)}_q$ is an edge of $T$). In fact, if $x^{(i)}_p$ and $x^{(i)}_{p'}$ are different vertices of $T$ of rank $i$, there are paths of length $i$  connecting $x_1$ to each of them. If these vertices were both adjacent to the vertex $x^{(i+1)}_q$ (of rank $i+1$), then $T$ would contain a cycle. We will say that $x^{(i)}_p$ is the only \textit{predecessor} of   $x^{(i+1)}_q$.
\end{rem}

On the monomials of $R$ fix the lexicographic order induced by the following arrangement of the indeterminates:
\begin{equation*}
x_1^{(0)}>x_1^{(1)}>\cdots>x_{s_1}^{(1)}>x_1^{(2)}>\cdots>x_{s_2}^{(2)}>\cdots> x_{s_d}^{(d)}.
\end{equation*}
\noindent
The arrangement thus obtained will be called the \textit{sequence of generators}.\\
The same order will be considered on the set of indices of the $K$-subgraphs.

\section{The resolution}

\subsection{The generators of the modules}

Let $S$ be the sequence of generators of $I$.
Any subsequence of $S$ will be called a \textit{symbol}, and will be written as a list of (pairwise distinct) elements, separated by commas, in round brackets.

Given a symbol $u=(\mu_1, \dots, \mu_r)$, $r$ will be called the \textit{length} of $u$, and denoted by $\vert u\vert$. Every subsequence of $u$ will be called a \textit{subsymbol} of $u$ (or a symbol \textit{contained} in $u$). We will also say that $\mu_1, \dots, \mu_r$ are the \textit{elements} of (belong to) $u$. We will thus treat $u$ as an ordered set.  We will set $\lcm(u)=\lcm(\mu_1,\dots, \mu_r)$.

We recall the following definition, which Lyubeznik \cite{L} gave for an arbitrary monomial ideal.

\begin{defn}\label{definition5}
A symbol $u=(\mu_1,\dots, \mu_r)$ is called $L$-\textit{admissible}  if $\mu_q$ does not divide $\lcm(\mu_{i_h},\mu_{i_{h+1}},\dots,\mu_{i_t})$ for any $h<t$ such that $q<i_h$.
\newline
It is called \textit{reduced} if $\mu_q$ does not divide $\lcm(\mu_1,\dots,\widehat{\mu_q},\dots,\mu_r)$ for all $q$ such that $1\leq q\leq r$.
\end{defn}

\begin{rem}
In the special case that we are considering here, i.e., when $I$ is generated by squarefree monomials of degree two, the condition of $L$-admissibility can be restated in the following simpler form: $\mu_q$ does not divide any product $\mu_{i_h}\mu_{i_{k}}$ for any $h, k$ such that $h<k\leq t$ and $q<i_h$. Similarly, the condition of being reduced translates into: $\mu_q$ does not divide any product $\mu_{i_h}\mu_{i_{k}}$ for any $h, k$ such that $q\ne i_h$, $q\ne i_k$. Thus being reduced implies $L$-admissibility.
\end{rem}

Set $L_0=R$ and, for all $r=1,\dots,\vert S\vert$, let $L_r$ be the free $R$-module generated by all $L$-admissible symbols of length $r$. Define the map $\delta_r:L_r\to L_{r-1}$ by setting
\begin{equation*}\label{d}
\delta_r((\mu_{i_1},\dots,\mu_{i_r}))\!=\!\! \sum_{j=1}^r \! (-1)^{j\!+\!1} \! \frac{\lcm(\mu_{i_1},\dots,\mu_{i_r})}{\lcm(\mu_{i_1},\dots\!,\widehat{\mu_{i_j}},\dots\!,\mu_{i_r})} (\mu_{i_1},\dots,\widehat{\mu_{i_j}},\dots,\mu_{i_r}).
\end{equation*}
Then one has the following

\begin{thm}[\cite{L}, p.~193]\label{Lresolution}
The complex
\begin{equation}
\tag{$\star$} 0 \longrightarrow L_s \stackrel{\delta_s}{\longrightarrow} L_{s-1} \xrightarrow{\delta_{s-1}} \cdots \stackrel{\delta_1}{\longrightarrow} L_0 \longrightarrow 0
\end{equation}
is a free resolution of $R/I$.
\end{thm}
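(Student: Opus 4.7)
The plan is to realise $(L_\bullet,\delta_\bullet)$ as a subcomplex of the Taylor resolution of $R/I$ and to show that the inclusion is a quasi-isomorphism. Recall that the Taylor complex $(T_\bullet,\partial_\bullet)$ has a basis of all length-$r$ symbols in degree $r$, with differential given by exactly the same formula as $\delta_r$; its acyclicity is classical and goes back to Taylor's thesis.

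First I would check that $L_\bullet$ is stable under $\partial$, so that $\delta_r$ is an honest restriction. If $u=(\mu_{i_1},\dots,\mu_{i_r})$ is $L$-admissible but some face $u\setminus\{\mu_{i_j}\}$ is not, any new obstruction $\mu_q$ to admissibility (with $q<i_h$ and $\mu_q\mid\lcm(\mu_{i_h},\dots,\mu_{i_t})$) must involve $\mu_{i_j}$ as the missing witness, which forces $\mu_{i_j}\mid\lcm(u\setminus\{\mu_{i_j}\})$. Then the coefficient $\lcm(u)/\lcm(u\setminus\{\mu_{i_j}\})$ equals $1$, and the relevant pair of terms of $\partial_r(u)$ cancels; what survives lies in $L_{r-1}$.

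To promote the inclusion $L_\bullet\hookrightarrow T_\bullet$ to a quasi-isomorphism I would apply discrete Morse theory in the spirit of Batzies and Welker. For each non-$L$-admissible symbol $u$, let $q(u)$ be the smallest index witnessing a failure of admissibility, and pair $u$ with the symbol $u\triangle\{\mu_{q(u)}\}$ obtained by toggling $\mu_{q(u)}$. The critical (unmatched) cells are precisely the $L$-admissible symbols, so by Forman's theorem the Taylor complex is chain-homotopy equivalent to a complex supported on these critical cells; a comparison of differentials identifies this Morse complex with $(L_\bullet,\delta_\bullet)$, and the theorem follows from the acyclicity of $T_\bullet$.

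The main obstacle will be verifying that the matching is acyclic, i.e.\ that the induced modification of the Hasse diagram has no directed cycle. I would argue via a monovariant on $q(\cdot)$: along any alternating zig-zag, $q$ is non-decreasing and strictly increases across each full up–down step, so closed walks are impossible. A secondary check is that every matched pair consists of two non-$L$-admissible symbols with the same value of $q$, so that no admissible symbol is accidentally absorbed into the matching; this is exactly what the minimality of $q(u)$ guarantees.
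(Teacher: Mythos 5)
The paper offers no proof of this statement: Theorem \ref{Lresolution} is quoted directly from Lyubeznik \cite{L}, so any complete argument is by definition a different route. The route you choose --- realising $(L_\bullet,\delta_\bullet)$ as the Morse complex of the Taylor complex under the matching that toggles the smallest generator witnessing non-admissibility --- is essentially the derivation of the Lyubeznik resolution that Batzies and Welker carry out in \cite{BW}, and its core is sound: the matching is a well-defined, grading-preserving involution on the non-$L$-admissible symbols (inserting $\mu_{q(u)}$ changes no relevant $\lcm$, since $\mu_{q(u)}$ already divides the $\lcm$ of the offending tail, so $q(\cdot)$ is constant on matched pairs), and your monovariant does rule out directed cycles --- with the caveat that $q$ increases \emph{strictly} across an up--down step only because a step leaving $q$ unchanged produces a symbol that still contains $\mu_{q}$, hence is matched \emph{downward}, so the zig-zag cannot continue from it; as literally stated, your strict-increase claim is slightly too strong, but the corrected version suffices.

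The one genuinely faulty step is the first paragraph. A subsymbol of an $L$-admissible symbol is always $L$-admissible: deleting $\mu_{i_j}$ only shrinks the $\lcm$ of every run (each run of the face lies inside a run of $u$), so no new divisibility obstruction can appear. Your hypothetical scenario is therefore vacuous, and the ``cancellation of a pair of terms'' is not meaningful --- each face occurs exactly once in the Taylor differential, so there is no pair to cancel. This needs repairing not only for correctness but because closure of admissibility under passing to subsymbols is precisely what you elide in the last step: it guarantees that every face of a critical cell is critical, hence that all gradient paths are trivial and the Morse differential literally equals the restricted Taylor differential $\delta_r$. Without that observation, ``a comparison of differentials'' is an unjustified assertion, since in general the Morse differential of a matching whose critical cells do not form a subcomplex differs from the restriction. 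Finally, note that a much shorter proof is available, closer in spirit to \cite{L} and \cite{N}: by the acyclicity criterion of Bayer and Sturmfels \cite{BS} it suffices to check that, for each monomial $m$, the simplicial complex of $L$-admissible symbols $u$ with $\lcm(u)$ dividing $m$ is acyclic, and this complex is a cone with apex the smallest generator dividing $m$.
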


\noindent
The resolution $(\star)$ is called a \textit{Lyubeznik resolution} of $I$.

\medskip
The Lyubeznik resolution is a refinement of the \textit{Taylor resolution}, whose $r$th module is generated by all symbols of length $r$, and in general it is not minimal. For all indices $r$ we will determine a submodule $F_r$ of $L_r$ such that the resulting resolution is always minimal for the edge ideals of trees. The differentials, however, will have to be redefined by means of discrete Morse theory. The submodules $F_r$ are generated by special $L$-admissible symbols, which we will call $F$-admissible. We now start the description of a procedure for selecting these symbols. The first steps are presented here below.
\begin{itemize}
\item[(I)] Select a descending sequence of indices $(i_1)_{p_1},\dots,(i_t)_{p_t}$ that does not contain any pair of indices corresponding to adjacent vertices in $T$.
\item[(II)] Pick all edge monomials corresponding to the vertices of the $K$-subgraphs $K^{(i_1)}_{p_1},\dots,K^{(i_t)}_{p_t}$.
\item[(III)] In the symbol thus obtained, cancel all monomials $\mu$ fulfilling the following condition: $\mu$  corresponds to a vertex of  $K^{(i_h)}_{p_h}$ and is not coprime with respect to an element $\nu$ of the symbol corresponding to a vertex of  $K^{(i_k)}_{p_k}$ for some $k>h$.
\item[(IV)] Consider all subsymbols of the symbols thus obtained.
 \end{itemize}

\begin{defn}
A symbol arising from the procedure (I)--(IV) will  be called \textit{almost F-admissible}. The set of the monomials of an almost $F$-admissible symbol that are divisible by $x^{(i_j)}_{p_j}$ - i.e., those corresponding to the vertices of $K^{(i_j)}_{p_j}$ - will be called the $(i_j)_{p_j}$-\textit{block} of $u$. The number $(i_j)_{p_j}$ will be called its \textit{index}.
\end{defn}

\begin{rem}
Note that the sequence of the indices of the blocks of an almost $F$-admissible  symbol $u$ is not always uniquely determined. This is particularly evident in the case where $u$ is formed by one single monomial $x^{(i)}_px^{(i+1)}_q$ not corresponding to a terminal edge of $T$; then $u$ can be, of course, indifferently assigned to the $(i)_p$-block or to the $(i+1)_q$-block of $u$.
\end{rem}

\begin{rem}\label{disjoint}
As a consequence of the selection performed at step (III), any two monomials of an almost $F$-admissible symbol $u$ belonging to different blocks are coprime. In particular, if two distinct monomials of $u$ are divisible by $x^{(i)}_p$, then they belong to the $(i)_p$-block of $u$.
\end{rem}

\begin{prop}\label{reduced-admissible}
Every almost $F$-admissible symbol is reduced. In particular it is $L$-admissible.
\end{prop}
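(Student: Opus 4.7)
The plan is to invoke the reformulation of \emph{reduced} recorded in the Remark after Definition \ref{definition5}: in the squarefree quadratic setting, reducedness amounts to the statement that no monomial $\mu$ in the symbol divides a product $\nu_1\nu_2$ of two other monomials. Since this property is manifestly stable under passing to subsymbols (the lcm of a subset of monomials divides the lcm of any superset), it suffices to handle an almost $F$-admissible symbol $u$ produced directly by steps (I)--(III), before step (IV) is applied; the $L$-admissibility statement then comes for free from the same Remark.

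The heart of the proof is a case analysis of three pairwise distinct monomials $\mu,\nu_1,\nu_2$ of $u$, organised by how they distribute among the blocks of $u$. If $\mu$ lies in a block different from those of both $\nu_1$ and $\nu_2$, Remark \ref{disjoint} forces $\mu$ to be coprime with each of $\nu_1,\nu_2$, so no variable of $\mu$ appears in $\nu_1\nu_2$ and divisibility fails. If $\mu$ shares a block, say with index $(i)_p$, with exactly one of the others --- say $\nu_1$ --- write $\mu=x^{(i)}_p y$ and $\nu_1=x^{(i)}_p y_1$ with $y\neq y_1$; Remark \ref{disjoint} still yields that $\mu$ is coprime with $\nu_2$, so for $\mu\mid\nu_1\nu_2$ the variable $y$ would have to appear in $\nu_1$, which is impossible since $y\notin\{x^{(i)}_p,y_1\}$. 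In the remaining case where $\mu,\nu_1,\nu_2$ all belong to a single $(i)_p$-block, they take the form $x^{(i)}_p y,\,x^{(i)}_p y_1,\,x^{(i)}_p y_2$ with $y,y_1,y_2$ pairwise distinct neighbours of $x^{(i)}_p$, and the same elementary observation rules out $\mu\mid\nu_1\nu_2$.

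The only non-trivial ingredient throughout is Remark \ref{disjoint}, which handles every situation in which two of the three monomials live in different blocks; the remaining single-block case is merely bookkeeping around the shared variable $x^{(i)}_p$ and the squarefree quadratic shape of an edge monomial. I do not anticipate any further obstacle, beyond making sure that the block assignment used in invoking Remark \ref{disjoint} is the one fixed when $u$ was constructed.
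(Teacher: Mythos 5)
Your proposal is correct and rests on exactly the same ingredient as the paper's proof, namely Remark \ref{disjoint} (coprimality of monomials in different blocks, coming from step (III)); the paper just compresses your three-case analysis into the single observation that the endpoint of $\mu$ other than its block's index divides no other monomial of $u$, so $\mu$ cannot divide a product of two other elements. Your preliminary reduction to pre-step-(IV) symbols is harmless but unnecessary, since Remark \ref{disjoint} already applies to all almost $F$-admissible symbols, subsymbols included.
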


\begin{proof}
Let $u$ be an almost $F$-admissible symbol. In view of Remark \ref{disjoint}, if the edge monomial $\mu=x^{(i)}_px^{(i+1)}_q$  belongs to the $(i)_p$-block of $u$, no other monomial of $u$ is divisible by $x^{(i+1)}_q$. If it belongs to the $(i+1)_q$-block of $u$, no other monomial of $u$ is divisible by $x^{(i)}_p$. Thus $\mu$ cannot divide the product of any other two monomials of $u$.
\end{proof}

\begin{defn}\label{gap-defn}
Let $x,y,z,w$ be four distinct vertices such that $xy$ and $zw$ are elements of the symbol $u$, where $xy>zw$. If $xz\in I$, we say that $xz$ is the \textit{bridge} between $xy$ and $zw$. In this case, we will say that $xy$ and $zw$ form a \textit{gap} in $u$ if $xz\notin u$, no other monomial of $u$ other than $zw$ is divisible by $w$, and no monomial smaller than $zw$ is divisible by $y$.
The monomial $xz$ will be called the \textit{bridge} of the gap.
\end{defn}

We will say that a symbol contains a bridge if it contains a triple of monomials $xy, zw, xz$, which occurs if and only if the symbol is not reduced.

\begin{rem}\label{ongaps}
(a) In the assumption of Definition \ref{gap-defn}, the condition $xz\notin u$ is always fulfilled if $u$ is reduced.\newline
(b) In  Definition \ref{gap-defn} we are not assuming that $x>y$. In this case the inequality $xy>zw$ implies that $x>z$. The same is true, however, if $y>x$, since $y$ is then the only predecessor if $x$. Hence $x$ is always the predecessor of $z$, which, in turn, implies that $z>w$. Finally, we deduce that $xz>zw$.
\end{rem}

We now present the last step of the procedure.
\begin{itemize}
\item[(V)] Discard all symbols that contain a gap.
\end{itemize}

\begin{defn}\label{adm-defn}
The symbols obtained after completion of the procedure (I)--(V) will be called $F$-\textit{admissible} (for $T$).
\end{defn}

\begin{rem} One may ask whether the steps (III) and (IV) could be interchanged. This would enlarge the set of almost $F$-admissible symbols: if the indices $(i_h)_{p_h}$ and $(i_k)_{p_k}$  with $k>h$ have been selected at step (I), and  $\mu=xy$ (where $x=x^{(i_h)}_{p_h}$)  and $\nu=xz$ (where $z=x^{(i_k)}_{p_k}$) are monomials corresponding to vertices of $K^{(i_h)}_{p_h}$ and  $K^{(i_k)}_{p_k}$, then, according to step (III), $\mu$ should be deleted. This would not occur, however, if, after selecting a subsymbol (as in step (IV)), $\nu$ no longer appeared. Suppose that the $(i_k)_{p_k}$-block  is still represented in the subsymbol by some monomial $zw$ (otherwise we could regard the index $(i_k)_{p_k}$ as not selected in step (I)). We would thus have a symbol that contains $\mu$ but not $\nu$, a possibility that is forbidden by the original procedure.  But the point is that $xy$ and $zw$ form a gap, so that the subsymbol would be discarded in step (V).  In other words, performing steps (III) and (IV) in this order makes the selection procedure more efficient.
\end{rem}

The $F$-admissible symbols can be characterized combinatorially in the following way.

\begin{prop}\label{reduced-gaps}
A symbol is $F$-admissible if and only if it is reduced and has no gaps.
\end{prop}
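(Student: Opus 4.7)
The plan is to handle the two implications separately. The forward direction is short: an $F$-admissible symbol is in particular almost $F$-admissible (step (V) only discards), so Proposition \ref{reduced-admissible} yields reducedness, and non-discarding by step (V) says the symbol has no gap.

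For the reverse direction, given $u$ reduced and gap-free, I will produce an explicit choice of indices for step (I) together with a block assignment so that steps (II)--(V) return $u$. The first thing to record is an auxiliary observation: if $u$ is reduced then, for any $\mu = xy \in u$, at most one of the endpoints $x, y$ lies in another monomial of $u$; otherwise monomials $xz_1 \in u$ and $yz_2 \in u$ would yield $\mu \mid xz_1 \cdot yz_2$. This justifies the following assignment rule: if another monomial contains $x$, put $\mu$ in the $x$-block; if another contains $y$, put it in the $y$-block; if $\mu$ is isolated (neither holds), pick either endpoint. Let $V^*$ denote the resulting set of block indices.

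The heart of the argument, and where I expect the most work, is showing that $V^*$ is independent in $T$. I would split this into three cases depending on whether each of two supposedly adjacent indices $v_1, v_2 \in V^*$ is shared (witnessed by several monomials) or arises from an isolated edge. In every case I plan to exhibit a gap in $u$ with bridge $v_1 v_2 \in I$, using witnesses $\mu_1, \mu_2 \in u$ containing $v_1, v_2$ respectively; the conditions in Definition \ref{gap-defn} are checked via reducedness (for $v_1 v_2 \notin u$) and via the sharing/isolation hypotheses combined with the auxiliary observation (for the two ``no other monomial is divisible by\ldots'' clauses). The technical subtlety will be matching the bridge endpoints so that Remark \ref{ongaps}(b)'s predecessor constraint holds for whichever of $\mu_1, \mu_2$ turns out to be the larger monomial; here uniqueness of predecessors in $T$ and the rank comparison between $v_1$ and $v_2$ should force the correct configuration.

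Once $V^*$ is independent, its elements can be arranged in decreasing order for step (I). Step (II) includes every $\mu \in u$ through its block index, and $\mu$ survives step (III) because any $\nu$ in a later block sharing a variable with $\mu$ would either place a second shared vertex on $\mu$ (forbidden by the auxiliary observation) or force two adjacent members of $V^*$ (contrary to independence). Step (IV) then selects $u$ as a subsymbol, step (V) preserves $u$ by assumption, and we conclude that $u$ is $F$-admissible.
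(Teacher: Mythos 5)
Your overall strategy coincides with the paper's: the ``only if'' direction via Proposition \ref{reduced-admissible} and step (V), and the ``if'' direction by exhibiting block indices for $u$ and checking the requirements of steps (I) and (III), with the no-gap hypothesis supplying the contradictions. Your explicit assignment rule is a clean substitute for the paper's right-to-left block reassignment, and your independence claim for $V^*$ is provable essentially as you sketch; in fact the three cases collapse, because the assignment rule already guarantees that the non-index endpoint of each monomial is divisible by no other element of $u$ (this gives both ``no other monomial is divisible by $w$'' and ``no monomial smaller than $zw$ is divisible by $y$'' in Definition \ref{gap-defn}), while $v_1>v_2$ adjacent forces $v_1$ to be the predecessor of $v_2$ and hence forces the monomial containing $v_1$ to be the larger one, so the bridge $v_1v_2$ sits in the configuration required by Remark \ref{ongaps}(b).

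The genuine gap is in your final sentence on surviving step (III). Step (III) is applied to the symbol produced at step (II), which consists of \emph{all} edges of the selected $K$-subgraphs, not only the elements of $u$. So the interfering element $\nu$ of a later $K$-subgraph $K_c$ need not belong to $u$, and then neither of your two reasons applies: if $\mu=ab\in u$ has block index $a$ and $T$ happens to contain the edge $bc$ for some later index $c\in V^*$, then $\nu=bc\in K_c$ is not coprime with $\mu$, it does not lie in $u$ (so the auxiliary observation says nothing about it), and it joins $b\notin V^*$ to $c\in V^*$ (so independence of $V^*$ says nothing either); by the letter of step (III), $\mu$ would be cancelled and $u$ would fail to appear among the subsymbols in step (IV). This configuration genuinely occurs (e.g.\ on the path $e$--$a$--$b$--$c$--$d$ with $u\supseteq\{ab,cd\}$ and $ab$ in the $a$-block), so the case must be excluded, and excluding it needs one more application of the gap machinery rather than the two facts you cite: taking a witness $cd\in u$ of the $c$-block, the monomials $ab>cd$ form a gap in $u$ with bridge $bc$ (here $b$ is forced to be the predecessor of $c$, the privacy of $d$ comes from the assignment rule, and every element of $u$ divisible by $a$ has leading variable at least $a>c$ and is therefore larger than $cd$), contradicting the hypothesis. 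With that extra argument inserted, your plan goes through.
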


\begin{proof} The ``only if'' part is a consequence of Definition \ref{adm-defn} and Proposition \ref{reduced-admissible}. For the ``if'' part, let $u$ be a symbol that is reduced and has no gaps. We show that its blocks fulfil the requirements set by steps (I) and (III) of the above procedure. First suppose that the elements of $u$ are pairwise coprime. Then the requirement in (III) is fulfilled. Let $x,y,z,w$ be vertices of $T$ such that $xy$ and $zw$ are two distinct elements of $u$, where $xy>zw$. Then none of $x, y$ is a neighbour of $z$ or $w$, which immediately yields the requirement in (I): if this were not true, and, say, $xz$ were an edge monomial, then $xy$ and $zw$ would form a gap (against our assumption) because no other monomial of $u$ is divisible by $y$ or $w$.\newline
Now assume that, for some indeterminate $z=x^{(i)}_p$, $u$ contains more than one monomial divisible by $z$, say $zw_1$ and $zw_2$ are two distinct elements of $u$. Then no other monomial of $u$ is divisible by $w_1$ or $w_2$, because otherwise $u$ would not be reduced.  We show that, up to changes in the way in which the monomials of $u$ are assigned to its blocks, no monomial of $u$ belonging to a block preceding the $(i)_p$-block can be divisible by $z$. Suppose by contradiction that some element of the $(i')_{p'}$-block of $u$, where $(i')_{p'}>(i)_p$, is divisible by $z$. Let $x=x^{(i')}_{p'}$. Then $xz\in u$, and no other monomial of $u$ can be divisible by $x$, because otherwise $u$ would not be reduced. Hence the monomial $xz$ can be assigned to the $(i)_p$-block of $u$, and the $(i')_{p'}$-block disappears. If this transformation is applied repeatedly, scanning the blocks of $u$ from right to left, then after a finite number of steps the condition required at step (III) is achieved and, in particular, the monomials belonging to different blocks are coprime. \newline
Now assume that the sequence of indices of the blocks of $u$ contains two indices $(i')_{p'}>(i)_p$ corresponding to consecutive vertices $x$ and $z$. Let $xy$ and $zw$ be elements of the $(i')_{p'}$-block and the $(i)_{p}$-block, respectively. Then no other monomial of $u$ can be divisible by $y$ or $w$, so that $xy$ and $zw$ form a gap. But this contradicts our assumption. Hence $u$ also fulfils the requirement in (I).
\end{proof}

\subsection{The complex}

For all  indices $r$, let $F_r$ be the free $R$-module generated by the $F$-admissible symbols of length $r$. We show that $F_r$ is the $r$th module of a minimal graded free resolution of $R/I$: this is the cellular resolution derived from the Taylor resolution by means of the construction described in Section 1 of \cite{BW}. Note that the Taylor resolution can be viewed as a simplicial complex, and therefore as a $\Z^n$-graded regular CW-complex $(X, {\rm gr})$, where, for all indices $r$, the $r$-cells are the symbols of length ({\it dimension}) $r$, and the $\Z^n$-grading is defined as follows: ${\rm gr}(\mu_1,\dots, \mu_r)=\sum_{k=1}^s{e_{j_k}}$ if $\lcm(\mu_1,\dots, \mu_r)=\prod_{k=1}^sx_{j_k}$ (here $e_j$ denotes the $j$th element of the canonical basis of $\Z^n$). We also endow the Cartesian product $\Z^n$ with the usual termwise defined partial order, with respect to which, for any two symbols $u$ and $v$, we have gr\,$(u)\leq\,$gr\,$(v)$ if and only if $\lcm(u)$ divides $\lcm(v)$.

We then consider the directed graph $G_X$ on $X$ whose set of edges $E_X$ is formed by the directed edges $u\to u'$ such that $u'\subset u$ and the lengths of $u$ and $u'$ differ by one. We consider the set of symbols that are not $F$-admissible, i.e., according to Proposition \ref{reduced-gaps}, that either contain a gap or are not reduced.

With respect to the notation of Definition \ref{gap-defn}, given a symbol $u$ containing the gap formed by $xy>zw$ (whose bridge is $xz$), we say that a bridge $\lambda$ of $u$ {\it follows} this gap if $xz>\lambda$.

We will call of {\it type} 1 every symbol containing a gap that is not followed by any bridge, and of {\it type} 2 any other non-$F$-admissible symbol. Note that any symbol of type 2  has no gaps (and then it is not reduced) or it contains a bridge that follows all its gaps. In any case it contains a bridge.

Let $A$ be the set of directed edges $u\to u'$ such that $u'$ is of type 1   and $u$ is obtained from $u'$ by inserting the monomial (say $xz$) which is the bridge of one of its gaps (say $xy>zw$) and is the smallest among the bridges of its gaps. We will call this operation {\it bridge insertion}.

\begin{lem}\label{propType1Type2Symbols}
With respect to the above notation, the following properties hold:
\begin{itemize}
\item[1)] The symbol  $u$ is of type $2$ (thus $u$ can never appear as the first vertex of an edge of $A$).
\item[2)] The symbol $u'$ is obtained from $u$ by omitting its smallest bridge (thus $u$ appears in exactly one edge of $A$).
\item[3)] If a symbol is of type $2$, it can be obtained by bridge insertion from a symbol of type $1$ (thus any symbol of type $2$ appears in some edge of $A$).
\end{itemize}
\end{lem}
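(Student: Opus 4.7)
The plan is to treat the three claims in order, relying on Proposition \ref{reduced-gaps} to recognize $F$-admissible symbols and on the minimality of $xz$ in the selection rule for $A$ to control all bridges.

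For part 1, I would first note that $u$ contains the triple $xy, zw, xz$, so it contains a bridge, and hence is not reduced (so not $F$-admissible). To show $u$ is not of type $1$, I would use that adding a monomial to a symbol can only destroy gaps---it may violate ``bridge $\notin u$'' or introduce a new divisor of $y$ or $w$---so every gap of $u$ is inherited from $u'$. Because $xz$ was chosen as the smallest bridge among the bridges of gaps of $u'$, the bridge $\beta$ of any such surviving gap in $u$ satisfies $\beta \geq xz$, with strict inequality forced since $\beta = xz$ would require $xz \notin u$. Since $xz$ is itself a bridge of $u$ via $xy$ and $zw$, it follows every gap of $u$, so $u$ is of type $2$.

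For part 2, I would suppose that $\lambda$ is a bridge of $u$ with $\lambda < xz$ and derive a contradiction. Then $\lambda \neq xz$, so $\lambda \in u'$. Let $\mu_1, \mu_2$ be the witnesses in $u$. If both lie in $u'$ then $\lambda$ is a bridge of $u'$; but $u'$ is of type $1$, so all its bridges are $\geq$ the bridge of its witnessing gap, hence $\geq xz$, a contradiction. Otherwise one witness equals $xz$, and I would do a case analysis on which endpoint of $xz$ is shared with $\lambda$, using Remark \ref{ongaps}(b) to control orientations. In most subcases the larger element $xy$ of the original gap serves as a replacement witness, exhibiting $\lambda$ as a bridge of $u'$ and yielding the same contradiction. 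The delicate subcase is $\lambda = xy$, in which the required second witness $y\delta$ with $\delta$ a child of $y$ would violate the gap condition ``no monomial smaller than $zw$ is divisible by $y$''; this rules out the subcase entirely.

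For part 3, given $v$ of type $2$, I would take $\lambda$ to be the smallest bridge of $v$ and set $v' = v \setminus \{\lambda\}$. The main task is to produce witnesses $\mu_1 = xy$, $\mu_2 = zw$ for $\lambda = xz$ in $v$ such that $(\mu_1, \mu_2)$ forms a gap in $v'$; I would choose $\mu_2$ to be the smallest monomial of $v \setminus \{\lambda\}$ divisible by $z$, and $\mu_1$ analogously extremal for $x$. If any gap condition fails in $v'$, the offending monomial together with the witnesses in $v$ should yield a bridge of $v$ strictly smaller than $\lambda$, contradicting the choice of $\lambda$. The same minimality shows $\lambda$ is the smallest bridge among bridges of gaps of $v'$ and that the exhibited gap is not followed by any bridge of $v'$, so $v'$ is of type $1$ and $v \to v'$ is the desired edge of $A$. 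The main obstacle I expect is in part 3: converting each potential obstruction to a gap condition into a concrete smaller bridge of $v$ requires a careful structural argument exploiting that $T$ is acyclic (so predecessors and neighborhoods are uniquely determined as in Remark \ref{unique}).
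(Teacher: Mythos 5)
There is a genuine gap in your argument for part 1). The assertion that ``adding a monomial to a symbol can only destroy gaps, so every gap of $u$ is inherited from $u'$'' is false: the gap conditions of Definition \ref{gap-defn} are indeed negative conditions for a \emph{fixed} pair of monomials, but a gap of $u$ is a pair of monomials \emph{of} $u$, and $u$ has one more monomial than $u'$, namely the inserted bridge $xz$. Hence $u$ may contain new gaps of the form $xz>cd$ in which the inserted monomial is itself one of the two constituents; these are not gaps of $u'$ (since $xz\notin u'$) and are not covered by your minimality argument. This is precisely the case to which the paper devotes essentially all of its proof of 1): assuming a gap $ab>cd$ of $u$ with bridge $bc<xz$ not followed by any bridge, one first rules out $cd=xz$ via Remark \ref{ongaps}(b), is forced into $ab=xz$, and then a case analysis on whether the new gap's bridge is $xc$ or $zc$ produces a gap of $u'$ (namely $xy>cd$ or $zw>cd$) whose bridge is smaller than $xz$, contradicting the minimality in the definition of $A$. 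Without this analysis your proof of 1) does not go through, and since 2) and 3) lean on the classification of $u$ and $u'$, the omission matters.

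Parts 2) and 3) follow the same strategy as the paper (case analysis on which witness of the alleged small bridge equals $xz$; deletion of the smallest bridge and verification of the two gap conditions by exhibiting a yet smaller bridge). Two smaller imprecisions there: in 2), when $\lambda=az$ shares the endpoint $z$, the replacement witness must be $zw$ (the only monomial of $u'$ divisible by $w$ guarantees $a\neq w$), not ``the larger element $xy$'', which is not divisible by $z$; and your dismissal of the subcase $\lambda=xy$ works only after observing that the hypothesis $xy<xz$ forces $x>y$ and $y<z$, so that the second witness $y\delta$ is indeed smaller than $zw$ --- the paper instead shows directly that $xy>xz$ in this situation. These are repairable at the level of detail, unlike the missing case in part 1).
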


\begin{proof}
1) Suppose by contradiction that $u$ contains some gap $ab>cd$ (with bridge $bc$) which is not followed by any bridge.  Thus $xz>bc$. Moreover, by definition of gap, no monomial of the form $de$ (where $d>e$) can belong to $u$. If both $ab$ and $cd$ belonged to $u'$, then they would form a gap also in $u'$, which would contradict the assumption on $u'$. Hence one of $ab$ and $cd$ is $xz$. But $cd\ne xz$, because, by Remark \ref{ongaps} (b), $bc>cd$. Thus $cd\in u'$ and we must have that $ab=xz$. Then $b=x$ or $b=z$. Recall that by Remark \ref{ongaps} (b) we have $x>z>w$ and $b>c>d$. If $bc=xc$, then from $xz>xc$ we deduce that $z>c$, so that $zw>cd$ and, by definition of gap (the one formed by $xy>zw$ in $u'$), no monomial containing $y$ can follow $cd$.  This implies that $xy>cd$ is a gap in $u'$. But its bridge $xc$ is smaller than $xz$, a contradiction. Now suppose that $bc=zc$, so that $xz>zc$. Note that $x>z$ implies $z>c$. Thus $zw>cd$ is a gap in $u'$ (recall that no monomial of $u'$ other than $zw$ is divisible by $w$), and its bridge $zc$ is smaller than $xz$, a contradiction.

\smallskip
2) By assumption, $xz$ is smaller than all bridges of $u'$. We have to show that it is smaller than all monomials of $u'$ that are bridges in $u$, but are not such in $u'$. The point is that the insertion of $xz$ can produce new bridges, since a monomial $\mu$ of $u'$ of the form $ax$ or $az$ could become a bridge between $xz$ and some other monomial $ab$ of $u'$. We show that in this case $\mu>xz$. First suppose that $\mu=az$. Note that $a\neq w$, because, by definition of gap, $zw$ is the only monomial of $u'$ containing $w$, and $ab\in u'$. But then $az$ is already a bridge in $u'$ (between $zw$ and $ab$). Now suppose that $\mu=ax$. If $a\neq y$, then $ax$ is a bridge in $u'$ between $xy$ and $ab$. So assume that $a=y$, i.e., $\mu=xy$. If $x>y$, then $y>b$.  On the other hand, by definition of gap (applied to $xy>zw$), we have $yb>zw$, which implies that $y>z$, so that $\mu>xz$. If $y>x$, then $x>z$ implies, once again, that $\mu>xz$.

\smallskip
3) We show that, if in some symbol $u$ of type 2 we cancel the smallest bridge, say $xz$, which is the bridge between $xy$ and $zw$ (where $xy>zw$), then, in the resulting monomial $u'$, these two monomials form a gap (which, of course, is not followed by any bridge). We have to verify the following two conditions:
\begin{itemize}
\item[i)] $u$ does not contain any other monomial of the  form $wb$. Suppose by contradiction that $u$ contains such a monomial. Then $zw$ is the bridge between $xz$ and $wb$. But from Remark \ref{ongaps} (b) we know that $xz>zw$, against the minimality of $xz$.
\item[ii)] $u$ does not contain any monomial of the form $ya$ that is smaller than $zw$. Suppose that $u$ contains some monomial $ya$. We show that $ya>zw$. This is clear if $y>x$, because $x>z$. So assume that $x>y$, whence $y>a$. Since $xy$ is the bridge between $xz$ and $ya$, we have that $xy>xz$.  Thus $y>z$, which immediately implies that $ya>zw$, as desired.\qedhere
\end{itemize}
\end{proof}

\begin{prop}\label{acyclic}
The graph $G^A_X$ with edge set
\[
E^A_X=(E_X\setminus A)\cup\{u'\to u \, \vert\, u\to u'\in A\}
\]
does not contain any directed cycle.
\end{prop}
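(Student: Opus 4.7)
The plan is to argue by contradiction: assume $G^A_X$ contains a directed cycle $v_0 \to v_1 \to \cdots \to v_k = v_0$. The edges of $G^A_X$ come in two flavours: \emph{down-edges} from $E_X\setminus A$, which delete one monomial, and \emph{up-edges} (reversals of $A$-edges), which insert into a type~1 symbol the bridge of its smallest-bridged gap, producing the matched type~2 symbol. Each up-edge raises the length by one and each down-edge lowers it by one, so a closed cycle must contain equally many of each kind, in particular at least one up-edge. Let $i_1 < i_2 < \cdots < i_m$ be the positions of the up-edges (read cyclically), and let $\mu_l$ be the monomial inserted at step $i_l$. By Lemma~\ref{propType1Type2Symbols}(2), $v_{i_l-1}$ is of type~1, $v_{i_l}=v_{i_l-1}\cup\{\mu_l\}$ is of type~2, and $\mu_l$ is the smallest bridge of $v_{i_l}$. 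Between positions $i_l$ and $i_{l+1}-1$ only down-edges occur, so $v_{i_{l+1}-1}\subseteq v_{i_l}$; moreover the first of these deletions cannot strip $\mu_l$, since the edge $v_{i_l}\to v_{i_l}\setminus\{\mu_l\}$ lies in $A$ and is therefore absent from $G^A_X$.

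The core of the proof would be the strict monotonicity $\mu_l > \mu_{l+1}$ (in the fixed lex order) for every $l=1,\dots,m$, read cyclically. Cycling gives $\mu_1>\mu_2>\cdots>\mu_m>\mu_1$, a contradiction that closes the argument.

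To establish the monotonicity, write $\mu_l=xz$ with associated gap $xy>zw$ of $v_{i_l-1}$, and $\mu_{l+1}=ac$ with associated gap $ab>cd$ of $v_{i_{l+1}-1}$. From $v_{i_{l+1}-1}\subseteq v_{i_l}$ it follows that any triple witnessing a bridge in $v_{i_{l+1}-1}$ witnesses one in $v_{i_l}$ as well; in particular every bridge of $v_{i_{l+1}-1}$ is $\geq\mu_l$. The favourable case is that the triple $\{xy,zw,xz\}$ is entirely preserved in $v_{i_{l+1}-1}$: then $\mu_l=xz$ is itself a bridge of $v_{i_{l+1}-1}$, so the type~1 property of $v_{i_{l+1}-1}$ yields $\mu_l\geq\mu_{l+1}$, and the inequality is strict since $\mu_l\in v_{i_{l+1}-1}$ whereas $\mu_{l+1}\notin v_{i_{l+1}-1}$ (the bridge of a gap is never in the symbol).

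The main obstacle is the case analysis required when an intermediate deletion has already removed one of $xy$, $zw$, or $\mu_l$ by the time we reach $v_{i_{l+1}-1}$. In those cases I would argue contrapositively: assuming $\mu_{l+1}\leq\mu_l$, pull the gap $ab>cd$ (or a closely related pair of monomials) back to $v_{i_l-1}$ and verify the defining conditions of Definition~\ref{gap-defn}, producing a gap of $v_{i_l-1}$ whose bridge is $\leq\mu_{l+1}\leq\mu_l$. This contradicts the choice of $\mu_l$ as the bridge of the smallest-bridged gap of $v_{i_l-1}$. The verification relies on the order conventions of Remark~\ref{ongaps}(b) to handle the relative ranks among $x,y,z,w,a,b,c,d$, and on the fact that the first deletion after $v_{i_l}$ cannot touch $\mu_l$, which rules out the most degenerate configurations. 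This bookkeeping is the principal technical burden.
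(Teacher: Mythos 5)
Your overall skeleton (alternation of insertions and deletions along a hypothetical cycle, tracking the inserted bridges $\mu_l$, and closing with a cyclic monotonicity contradiction) is close to the paper's, but the proposal has a genuine gap that is not mere bookkeeping. First, you never invoke the fact that $\gr$ is constant along a cycle: $\gr$ is preserved by insertion edges and can only drop along deletion edges, so in a cycle every deleted monomial must be a \emph{bridge} of the symbol it is removed from; since by Lemma \ref{propType1Type2Symbols}(2) the just-inserted $\mu_l$ is the \emph{smallest} bridge of $v_{i_l}$, the deleted monomial is strictly larger than $\mu_l$. This observation (not just ``the matched edge is unavailable'') is what controls the obstruction monomial that prevents the gap of $v_{i_{l+1}-1}$ from pulling back to a gap of $v_{i_l-1}$, and without it your deferred case cannot be closed.

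Second, and more seriously, the monotonicity you aim for points the wrong way. Your favourable-case argument correctly yields $\mu_{l+1}<\mu_l$ when the triple $xy,zw,xz$ survives the intermediate deletion; but the paper's obstruction analysis (the computation following the choice of the maximal inserted bridge, which uses only the local data and the $\gr$-constancy above, not maximality itself) shows that $\mu_{l+1}<\mu_l$ is \emph{impossible} at every step of a cycle. So the favourable case never occurs, and in the unfavourable case --- exactly the one you defer as ``the principal technical burden'' --- the local structure forces $\mu_{l+1}>\mu_l$, the reverse of what you need. The plan ``prove $\mu_l>\mu_{l+1}$ in all cases'' is therefore not completable, and your sketch for that case is also internally inconsistent: you assume $\mu_{l+1}\le\mu_l$ while trying to prove $\mu_l>\mu_{l+1}$, and a gap of $v_{i_l-1}$ with bridge $\le\mu_l$ does not contradict the minimality of $\mu_l$. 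The paper sidesteps all of this by selecting the type-(b) edge whose inserted bridge $xz$ is \emph{maximal} in the cycle, so that the next inserted bridge satisfies $bc<xz$ by fiat, and then running the obstruction analysis once to show $bc<xz$ is impossible; an alternative route consistent with your setup would be to prove the \emph{increasing} monotonicity $\mu_{l+1}>\mu_l$ everywhere, but that is precisely the obstruction analysis you have not supplied.
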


\begin{proof}
 The (directed) edges of $E^A_X$  are of the following two types:
\begin{itemize}
\item[(a)] the edges $u_1\to u_2$ not belonging to $A$, where $u_2$ is obtained from $u_1$ by deleting a monomial;
\item[(b)] the edges $u'\to u$ where $u'$ is of type 1, $u$ is of type 2 and $u$ is obtained from $u'$ by bridge insertion.
\end{itemize}

Note that, with respect to the above notation, $\vert u_2\vert=\vert u_1\vert-1$, whereas $\vert u\vert=\vert u'\vert+1$. It follows that any directed cycle of $G^A_X$ must contain at least one edge of each type. More precisely, since every edge of type (b) (along which the length grows by 1) is followed by an edge of type (a) (along which the length drops by 1), every directed cycle consists of an alternating sequence of edges of types (a) and (b), and an alternating sequence of vertices of types 1 and 2.

Moreover, $\gr(u')=\gr(u)$, whereas $\gr(u_1) \geq \gr(u_2)$ and equality holds if and only if $u_2$ is obtained from $u_1$ by deleting a bridge. Thus, in any directed cycle of $G^A_X$ all vertices have the same degree and two consecutive vertices always differ by a bridge. Let $C$ be a directed cycle of $G^A_X$. Let $u'\to u$ be a directed edge of $C$ of type (b), where $u$ is obtained from $u'$ by inserting the bridge $xz$ of the gap $xy>zw$. Moreover, assume that, among all edges of type (b) of $C$, this edge is one for which $xz$ is maximum. The cycle $C$ also contains an edge $v\to u'$ of type (a), where $v$ is obtained from $u'$ by inserting  a bridge other than $xz$ (because $v\neq u$). In particular, $xz$ does not belong to $v$. This implies that at some point of the directed path  of $C$ from $u$ to $v$ the bridge $xz$ is deleted. The first edge of this path, say $u\to u_1$, is of type (a), and $u_1\neq u'$, so that $u_1$ is of type 1 and $xz\in u_1$. Hence in $u_1$ there are two monomials $ab>cd$ forming a gap with bridge $bc$ (the smallest), where, by choice of $u'$, $bc<xz$.  Since by Remark \ref{ongaps} (b) we have $bc>cd$, we also have that $cd\ne xz$, whence $cd\in u'$, because $cd\in u_1\subset u$.

Moreover, as we have seen in the proof of Lemma \ref{propType1Type2Symbols} 1), the condition $ab=xz$ is incompatible with the fact that $xz$ is the smallest of the bridges of the gaps contained in $u'$. Hence $ab$ and $cd$ both belong to $u'$, where, of course, they cannot form a gap, because $xz>bc$. Since $u'\subset u$ (so that $ab, cd\in u$), for the same reason we have that $bc\notin u$ (otherwise $bc$ would be a bridge of $u$ smaller than $xz$), whence $bc\notin u'$. Thus the obstruction preventing $ab>cd$ from forming a gap in $u'$ must be due to the presence of some other monomials forbidden by the definition of gap, i.e., in $u'$ there is some other monomial $\mu=de$ (with $d>e$) or some monomial $\mu=af$ that is smaller than $cd$.

Since $\mu$ does not belong to $u_1$, it must have been deleted along the path from $u$ to $u_1$, hence $\mu$ is a bridge in $u$, so that $\mu>xz>bc$. These inequalities exclude the case $\mu=de$.  If $\mu=af$, since $cd>af$, we have $c>a$, whence $b>a$, and $a>f$. Thus $bc>\mu$, a contradiction. This shows that $G^A_X$ cannot contain any directed cycle.
\end{proof}

As a consequence of Lemma \ref{propType1Type2Symbols} and Proposition \ref{acyclic}, $A$ is a so-called \textit{acyclic matching} for $G_X$. According to \cite[Proposition 1.2]{BW}, this implies that there is a $\Z^n$-graded CW-complex (called the \textit{Morse complex} of the Taylor resolution), which has the following two properties: its $r$-cells are in one-to-one correspondence with the $r$-cells of $X$ not belonging to any edge of $A$ (which are called \textit{critical cells} in \cite{BW}), i.e., with the $F$-admissible symbols of length $r$,  and it is homotopy equivalent to the Taylor resolution. This Morse complex supports a cellular resolution of $R/I$. An explicit description of its differentials will be provided later on. We first show its minimality.
In view of \cite[Corollary 7.6]{BW}, our claim will follow from the next result.

\begin{lem}\label{divisibility}
Let $u$ and $v$ be distinct $F$-admissible symbols. Then $\gr(u) \neq \gr(v)$.
\end{lem}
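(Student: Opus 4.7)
The plan is to argue by contradiction. Suppose $u\neq v$ are two $F$-admissible symbols with $\gr(u)=\gr(v)$, so they share the same vertex support $V$. Let $e$ be the lex-smallest monomial in the symmetric difference $u\triangle v$; after swapping $u$ and $v$ if needed, write $e=zw\in u\setminus v$ with $z$ the predecessor of $w$ in $T$, and aim to exhibit a gap in $v$, which will contradict its $F$-admissibility via Proposition \ref{reduced-gaps}. Because $v$ must cover $w$ while $zw\notin v$, there is an edge $ww'\in v$ with $w'$ a child of $w$ in $T$. Since $ww'<zw=e$, the minimality of $e$ forces $ww'\in u\cap v$. The edges $zw,\, ww'\in u$ share $w$, so Proposition \ref{reduced-admissible} makes $z$ and $w'$ the private endpoints of $zw$ and $ww'$ in $u$; in particular, no other monomial of $u$ is divisible by $z$ or by $w'$. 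Repeating the minimality argument inside $v$ shows that $w'$ is private to $ww'$ in $v$ as well: any edge $w'w''\in v$ with $w''$ a child of $w'$ would be smaller than $ww'<e$, hence would lie in $u$, contradicting the privacy of $w'$ in $u$.

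Since $zw\notin v$ but $z\in V$ must be covered by $v$, there are only two possibilities: either (i) $pz\in v$ with $p$ the parent of $z$, or (ii) $zw''\in v$ for some sibling $w''\neq w$ of $w$. In case (ii) the privacy of $z$ in $u$ rules out $zw''\in u$, so the minimality of $e$ forces $zw''>zw$, i.e.\ $w''>w$ in the lex order. In both cases I claim that $pz$ (resp.\ $zw''$) together with $ww'$ forms a gap in $v$ with bridge $zw$ in the sense of Definition \ref{gap-defn}. The three gap conditions are then: $zw\notin v$, which holds by construction; $w'$ is private to $ww'$ in $v$, shown above; and no monomial of $v$ smaller than $ww'$ is divisible by $p$ (resp.\ by $w''$). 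For this last condition one uses the rank-respecting lex order: every edge of $T$ incident to $p$ has leading variable at least $p>w$, and every edge incident to $w''$ has leading variable at least $w''>w$, so all such edges are lex-greater than $ww'$. The resulting gap contradicts the $F$-admissibility of $v$ via Proposition \ref{reduced-gaps}, forcing $u=v$.

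The main obstacle, beyond setting up the smallest element of $u\triangle v$, is this third gap condition. It asks one to compare the lex order of $ww'$ with that of arbitrary edges of $T$ through a fixed vertex $y$; the key input is that the leading variable of any such edge has rank at most that of $y$, so it dominates $y$ (and hence $w$) in the lex order whenever $y$ is an ancestor of $w$ or a same-rank sibling of $w$ with smaller subscript than $w$.
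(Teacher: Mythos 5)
Your proof is correct. The key steps all check out: with the rank-respecting lexicographic order, $ww'<zw$ and $w'w''<ww'$ hold because the leading variable of an edge is the endpoint of smaller rank; reducedness of $u$ (via Proposition \ref{reduced-admissible}/\ref{reduced-gaps}) does give the privacy of $z$ and $w'$ in $u$; and in both cases (i) and (ii) the pair $pz>ww'$ (resp.\ $zw''>ww'$) satisfies all three conditions of Definition \ref{gap-defn} with bridge $zw$, the last condition following exactly as you say because every edge through $p$ (resp.\ through $w''$, using $w''>w$) has leading variable greater than $w$. The route is genuinely different from the paper's in its details. The paper first disposes of the nested case, then works under the weaker hypothesis $\lcm(v)\mid\lcm(u)$, chooses the smallest \emph{successor} of $u$ that is not a successor of $v$, and shows the corresponding parent edge $ab$ does not divide $\lcm(v)$; the contradiction there is reached through a two-stage case analysis that invokes gap-freeness of \emph{both} $u$ and $v$ and reducedness of $v$. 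You instead take the lex-smallest edge of the symmetric difference and exhibit a single explicit gap in $v$ whose bridge is that edge; your argument uses only reducedness of $u$ and gap-freeness of $v$ (plus the minimality), which is leaner, at the cost of proving only the equal-support case rather than the paper's stronger divisibility assertion (which is all the lemma requires). One small point worth making explicit in a write-up: if $w$ (or $z$) happens to be a leaf (or the root with no other covered children), the required covering edge in $v$ does not exist and the support equality is violated outright; your deduction silently absorbs this degenerate case, and it would read better if flagged.
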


\begin{proof}  First note  that whenever some reduced symbol is contained in some other reduced symbol, the grade of the former is strictly smaller than  the grade of the latter. So assume that $u$ and $v$ are incomparable by inclusion. Note that it suffices to prove the claim under the assumption that $\gr(u) \geq \gr(v)$, i.e., that $\lcm(v)$ divides $\lcm(u)$. For every monomial $ab$ of $u$ (or $v$) such that $a>b$, we call $b$ a successor of $u$ (of $v$). Since every vertex of the tree $T$ has at most one  predecessor, whenever $b$ is a successor of $u$, $ab$ belongs to $u$. Therefore, since $u$ is not contained in $v$, there is some successor of $u$  that is not a successor of $v$. Let $b$ (whose predecessor is $a$) be the smallest such successor of $u$. Then $ab\in u$, but $ab\notin v$. We show that $ab$ does not divide $\lcm(v)$. This will immediately yield the claim.\newline
Suppose by contradiction that $ab$ divides $\lcm(v)$. Since $b$ divides $\lcm(v)$, it follows that $bc\in v$ for some $c<b$. Thus $c$ divides $\lcm(u)$. If $bc\notin u$, then $cd\in u$ for some $d<c$. Since $ab>cd$ and these two monomials do not form a gap in $u$, we must have $de\in u$ for some $e<d$. Note that $d$ and $e$ are successors of $u$ smaller than $b$, whence $cd, de\in v$. But this, together with $bc\in v$, would imply that $v$ is not reduced. We thus conclude that $bc\in u$.\newline
On the other hand, $a$ divides $\lcm(v)$, which implies that $xa\in v$ for some $x\neq b$. The vertex $x$ may be greater or smaller than $a$, but in any case $xa>bc$. Since these two monomials do not form a gap in $v$, we have one of the following cases:
\begin{itemize}
\item[i)] $cd\in v$, for some $d<c$. Then, as above, we conclude that $cd\in u$: it suffices to apply the argumentation developed for the monomials $ab\in u$ and $bc\in v$ to the monomials $bc\in u$, and $cd\in v$. But then all three monomials $ab, bc$ and $cd$ belong to $u$, which is impossible.
\item[ii)] $b>x$ (in which case $a>x$) and $xy\in v$ for some $y<x$. Then $u$ contains some monomial divisible by $x$, either $ax$ or $xz$, with $z<x$. If $ax\in u$, we conclude as above that $xy\in u$, which is incompatible with the fact that $ab\in u$. So suppose that $xz\in u$. Since $ab>xz$ and these monomials do not form a gap in $u$, we have that $zw\in u$ for some $w<z$. Since both $z$ and $w$ are successors of $u$ smaller than $b$, it follows that $xz$ and $zw$ belong to $v$, which, together with $ax\in v$, would once again cause $v$ to be not reduced.
\end{itemize}
In any case we have a contradiction.
\end{proof}
\begin{rem} The preceding lemma yields the so-called \textit{dual version of Hochster's formula} (see, e.g., \cite[Corollary 1.40]{MS}) on squarefree monomial ideals in the special case of edge ideals of trees: it proves that the nonzero multigraded Betti numbers all lie in squarefree degrees. It also gives the result in \cite[Theorem 3.5]{EF}, which Erey and Faridi proved for the more general class of simplicial forests: for every multidegree there is at most one nonzero Betti number, and this is equal to 1.
\end{rem}
The following notation is taken from \cite{BW}. For every pair $(u,u')$ of $F$-admissible symbols with $r=\vert u\vert=\vert u'\vert+1$, call $[u:u']$ the coefficient of $u'$ in $\delta_r(u)$. If the directed edge $u\to u'$ belongs to $A$, we then set $m(\{u,u'\})=-[u:u']$, otherwise we set $m(\{u,u'\})=[u:u']$. Given a directed path $P:u_0\to u_1\to\cdots\to u_t$ in $G^A_X$ (a so-called \textit{gradient path} from $u_0$ to $u_t$), we also set $m(P)=\prod_{i=0}^{t-1}m(\{u_i, u_{i+1}\})$.  Note that $\gr(u_0)\geq\gr(u_t)$. We can now define the $r$th differential $\partial_r: F_r\rightarrow F_{r-1}$ of our resolution. According to \cite[Lemma 7.7]{BW}, for every $F$-admissible symbol $u$ of length $r$,
\begin{equation}\label{diff}
\partial_r(u) \!=\!\! \sum_{\substack{u'\subset u \\ \vert u'\vert =r-1}} [u:u'] \sum_{\substack{u'' F-\text{admissible}, \\ \vert u''\vert=r-1}} \ \sum_{\substack{P \text{ gradient path}\\ \text{from } u' \text{ to } u''}}m(P)\underline{x}^{\mbox{\small gr}\,(u)-\mbox{\small gr}\,(u'')}u''.
\end{equation}

We have just established the following
\begin{thm}\label{main}
The cellular resolution $(F_r, \partial_r)$ is a minimal graded free resolution of $R/I$.
\end{thm}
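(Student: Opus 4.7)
The plan is to assemble the theorem from the machinery already developed, invoking the Batzies--Welker framework with the acyclic matching $A$. First I would observe that the Taylor resolution of $I$ is a cellular resolution supported on the $\Z^n$-graded CW-complex $(X,\gr)$ described just before Lemma \ref{propType1Type2Symbols}: its cells are all symbols (subsequences of $S$), and its differential is the standard Taylor differential. So the starting data needed by \cite[Prop.~1.2]{BW} are in place.

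Next I would verify that $A$ is an acyclic matching on the directed graph $G_X$. That $A$ is a matching is exactly the content of Lemma \ref{propType1Type2Symbols}: part~(1) shows that a type-$2$ symbol $u$ obtained by bridge insertion is never the first vertex of another edge of $A$, part~(2) shows that $u$ determines its partner $u'$ uniquely (by deleting the smallest bridge), and part~(3) ensures that every non-$F$-admissible symbol is matched (symbols of type~$1$ are first vertices of edges in $A$, and symbols of type~$2$ are second vertices). The $F$-admissible symbols are precisely the critical cells (those not appearing in any edge of $A$) by Proposition \ref{reduced-gaps}. Acyclicity of the modified graph $G^A_X$ is exactly Proposition \ref{acyclic}.

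With $A$ an acyclic matching, \cite[Prop.~1.2]{BW} produces a $\Z^n$-graded CW-complex, the Morse complex of the Taylor resolution, whose $r$-cells are in bijection with the critical $r$-cells of $X$ and which is homotopy equivalent to $X$. By the general results of \cite{BW}, this Morse complex supports a cellular free resolution of $R/I$ whose $r$-th module is the free $R$-module $F_r$ on the $F$-admissible symbols of length $r$, with differential given by the gradient-path formula (\ref{diff}) (see \cite[Lem.~7.7]{BW}). This shows that $(F_r,\partial_r)$ is a graded free resolution of $R/I$.

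For minimality, I would apply \cite[Cor.~7.6]{BW}: a cellular resolution arising from an acyclic matching on the Taylor complex is minimal whenever the multigrades of the critical cells are pairwise distinct. This distinctness is precisely Lemma \ref{divisibility}, which shows that if $u\ne v$ are $F$-admissible then $\gr(u)\ne \gr(v)$. Combining these facts completes the proof. The main conceptual step has already been done in the preceding lemmas; the only thing to be careful about here is checking that the hypotheses of \cite[Prop.~1.2]{BW} and \cite[Cor.~7.6]{BW} apply to our setup, in particular that $A$ consists of $\gr$-preserving edges (which is immediate: $u\to u'\in A$ inserts a bridge, and inserting or deleting a bridge does not change the lcm).
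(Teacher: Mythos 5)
Your proposal is correct and follows essentially the same route as the paper: the matching and acyclicity come from Lemma \ref{propType1Type2Symbols} and Proposition \ref{acyclic}, the resolution from the Batzies--Welker Morse complex machinery, and minimality from Lemma \ref{divisibility} via \cite[Corollary 7.6]{BW} (the paper additionally cites \cite[Proposition 7.3]{BW} to phrase the sufficient condition in terms of gradient paths, but since Lemma \ref{divisibility} gives distinct multidegrees for \emph{all} pairs of distinct $F$-admissible symbols, your blanket application is equivalent). No gaps.
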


\begin{proof}
We just have to observe that minimality is ensured by Lemma \ref{divisibility}, since, according to \cite[Corollary 7.6]{BW} and \cite[Proposition 7.3]{BW}, a sufficient condition is the following: one has that gr\,$(u)\neq\,$gr\,$(v)$ for all $F$-admissible symbols $u$ and $v$ such that $\vert u\vert=\vert v\vert +1$ and either $v\subset u$ or there exists a gradient path from a symbol $u'\subset u$ of length $\vert v\vert$ to $v$.
\end{proof}
\begin{rem} Note that, according to (\ref{diff}), the morphism $\partial_r$ sends every $F$-admissible symbol $u$ of length $r$ to a linear combination (with monomial coefficients) of $F$-admissible symbols $u''$ of length $r-1$ such that $\lcm(u'')$ divides $\lcm(u)$. Among these we find all $u''$ that are contained in $u$ (those appearing in the definition of the morphism $\delta_r$ of the Lyubeznik resolution, and for which the gradient path $P$ consists of one single directed edge $u\to u''$), but possibly some others. This will become evident in Example \ref{u''}.
\end{rem}

\section{Examples and further remarks}
We first give a concrete example of determination of the modules of the cellular minimal free resolution of the edge ideal of a tree. We will apply the selection procedure described in Section 3. In order to simplify our notation, we will replace the symbol $x^{(i)}_p$ by the number $i$ with $p$ apices. In this way we will also write the index of the $K$-subgraph $K^{(i)}_p$ and of the $(i)_p$-block.
\begin{ex} Let $T$ be the tree on the vertices $0>1> 1'>2>2'>2''>3$ whose edges are $01$, $01'$, $12, 12'$, $1'2''$, $23$. Its $K$-subgraphs are:
$$ [0]: 01, 01'\qquad [1]: 01, 12, 12'\qquad [1']: 01', 1'2''\qquad [2]: 12, 23.$$
In the next table we list, for any choice of the sequences of indices in step (I) (written in square brackets), the edges of the corresponding $K$-subgraphs, as prescribed in step (II), and perform on them the cancellations indicated in (III). Then we consider, as in step (IV), all their subsymbols. Each horizontal section refers to the sequences of a given length (one in each column). In each column, the subsymbols will be arranged in descending order of length (denoted by $r$).  We will avoid repetitions by omitting the subsymbols already obtained in the preceding sections, and replacing the others by a reference to the number of the column of the same section (expressed by ($\ast$) or $(\ast\ast)$) where (in the same row) they appear for the first time. The monomials forming a gap are boxed, and the vertices of the bridge are in bold type. According to step (V), the corresponding subsymbols have to be discarded.
\vskip.3truecm
\begin{tabular}{|l|l|l|l|l|}
\hline
& $[0]$&$[1]$&$[1']$ &$[2]$\\
\hline
$r=3$&&$(01, 12, 12')$&&\\
\hline
$r=2$&$(01, 01')$&$(01, 12)$&$(01', 1'2'')$&$(12, 23)$\\
&&$(01, 12')$&&\\
&&$(12, 12')$&&\\
\hline
$r=1$&$(01)$&$(\ast)$&&\\
& $(01')$& $(12)$& $(\ast)$&$(\ast\ast)$\\
&& $(12')$& $(1'2'')$&$(23)$\\
\hline
\end{tabular}\\

\begin{tabular}{|l|l|l|l|}
\hline
&$[0,2]$ &$[1,1']$&$[1',2]$\\
\hline
$r=4$&&$(\cancel{01},01', 12, 12', 1'2'')$&$(01', 12, 1'2'', 23)$\\
\hline
$r=3$&$(\cancel{01}, 01', 12, 23)$&$(\boxed{{\bf 0}1'},\boxed{{\bf1}2}, 12')$&$(\ast)$\\
&&$(01', 12, 1'2'')$&$(\ast\ast)$\\
&&$(01', 12', 1'2'')$&$(01', 1'2'',23)$\\
&&$(12, 12', 1'2'')$&$(12, 1'2'',23)$\\
\hline
$r=2$&$(\boxed{{\bf0}1'}, \boxed{{\bf1}2})$&$(\ast)$&$(\ast)$\\
&$(01',23)$& $(\boxed{{\bf0}1'}, \boxed{{\bf1}2'})$& $(\ast)$\\
&& $(12,1'2'')$& $(\ast\ast)$\\
&&$(12',1'2'')$& $(1'2'', 23)$\\
\hline
\end{tabular}
\vskip.3truecm
If we count lengths $r$ and degrees $d$ in order to compute the graded Betti numbers $\beta_{r,d}(R/I)$, we exactly obtain the numerical resolution provided by CoCoA \cite{ABL}:
$$0\longrightarrow R(-6)^2\longrightarrow R(-4)\oplus R(-5)^6\longrightarrow R(-3)^6\oplus R(-4)^4\longrightarrow R(-2)^6\longrightarrow 0.$$
Note that the vertex 0 has been chosen at random. The result is independent of this choice.
\end{ex}
We next present two gradient paths from a given $F$-admissible symbol $u$ of length $r$ to two $F$-admissible symbols $u''_1$ and $u''_2$ of length $r-1$.
\begin{ex}\label{u''}
Let $T$ be the tree on the vertices $0>1>2>3>4>4'>5>6$, whose edges are $01$, $12$, $23$, $34$, $34'$, $45$, $56$. Then $u=(01, 23, 34', 45, 56)$ is an $F$-admissible symbol of length 5. In the following table, the cancellations correspond to directed edges of type (a), the insertions to directed edges of type (b), according to the classification introduced at the beginning of the proof of Proposition \ref{acyclic}. The first step is a cancellation that takes $u$ to a symbol of type 1 (which is reduced and contains a gap). The following symbols are alternatively of types 2 and 1, of lengths $r$ and $r-1$, and the procedure stops eventually, when an edge of type (a) leads to an $F$-admissible symbol of length $r-1$. The vertices in bold type are those forming the bridge of the smallest gap in a symbol of type 1 (and thus forming the monomial that will be inserted in the subsequent step).
\vskip.3truecm
\begin{tabular}{|l|l|l|l|l|l|l|l|}
\hline
$u$&$01$&&$23$&$34'$&&$45$&$56$\\
&&&&&&&$\, \, \downarrow$\\
\hline
type 1&$01$&&$23$&${\bf3}4'$&&${\bf4}5$&$ \, \square$\\
&&&&&$\ \, \downarrow$&&\\
\hline
type 2&$01$&&$23$&$34'$&$\circled{\bf 34}$&$45$&\\
&&&&&&$\, \, \downarrow$&\\
\hline
$u''_1$&$01$&&$23$&$34'$&$34$&$\, \square$&\\
\hline
\end{tabular}

\begin{tabular}{|l|l|l|l|l|l|l|l|}
\hline
$u$&$01$&&$23$&$34'$&&$45$&$56$\\
&&&&$\, \, \downarrow$&&&\\
\hline
type 1&$0{\bf1}$&&$ {\bf2}3$&$\, \square$&&$45$&$56$\\
&&$\, \ \downarrow$&&&&&\\
\hline
type 2&$01$&$\circled{\bf 12}$&$23$&&&$45$&$56$\\
&&&&&&&$\, \, \downarrow$\\
\hline
type 1&$01$&$12$&$2{\bf3}$&&&${\bf4}5$&$\, \square$\\
&&&&&$\ \, \downarrow$&&\\
\hline
type 2&$01$&$12$&$23$&&$\circled{\bf 34}$&$45$&\\
&&&$\ \, \downarrow$&&&&\\
\hline
$u''_2$&$01$&$12$&$\, \square$&&$34$&$45$&\\
\hline
\end{tabular}
\end{ex}
\vskip.3truecm
From the description of the minimal cellular resolution of $R/I(T)$ we can derive a formula to compute its projective dimension. Recall that a \textit{leaf} or \textit{free vertex} of a graph is a vertex of degree one.

\begin{lem}\label{max-u}
Let $T$ be a tree, and let $u$ be a maximal $F$-admissible symbol for $T$. Then
\begin{equation*}
\vert u\vert=\#\!\left\{\text{\rm leaves of $T$ in the blocks of $u$}\right\} + \#\!\{\text{\rm blocks not in $u$}\}.
\end{equation*}
\end{lem}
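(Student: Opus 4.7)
The plan is to count $|u|$ geometrically via the subgraph $T[u]\subseteq T$ spanned by the edges in $u$, combined with the block decomposition, and then to invoke maximality to bring in every non-leaf of $T$.

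First, I would translate the reducedness of $u$---which holds by Proposition~\ref{reduced-gaps}---into structural information about $T[u]$. An edge $xy\in u$ divides a product $\mu_1\mu_2$ of two other edges of $u$ exactly when $\mu_1=xa$ and $\mu_2=yb$ for vertices $a\neq y$, $b\neq x$, i.e., when both $x$ and $y$ have degree $\geq 2$ in $T[u]$. Hence reducedness of $u$ is equivalent to the condition that every edge of $T[u]$ has an endpoint of degree~$1$ in $T[u]$, which forces each connected component of $T[u]$ to be a star. In other words, $T[u]$ is a \emph{star forest}.

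Combined with the block decomposition, this already gives a clean edge count. The selected indices $V\subseteq V(T)$ from step~(I) form an independent set, and by Remark~\ref{disjoint} and step~(III) each block $B_v$ ($v\in V$) is exactly the set of edges of $u$ incident to $v$. Up to the single-edge ambiguity (which does not affect numerics), the vertices in $V$ are the centers of the stars of $T[u]$ and the star-leaves are exactly the vertices of $V_u\setminus V$, where $V_u:=V(T[u])$. Since each star contributes one edge per star-leaf, $|u|=|V_u\setminus V|$. Splitting this count according to whether a star-leaf is a leaf of $T$ gives $|u|=L+M$, where $L$ is the number of leaves of $T$ in $V_u\setminus V$---equivalently, the leaves of $T$ that appear as a non-index endpoint of some edge in some block of $u$, which I read as the meaning of ``leaves of $T$ in the blocks of $u$''---and $M$ is the number of non-leaves of $T$ in $V_u\setminus V$. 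On the other hand, ``blocks not in $u$'' corresponds to the $K$-subgraphs of non-leaves of $T$ that are not selected as indices, so $N$ is the total number of non-leaves of $T$ outside $V$. The identity of the lemma therefore reduces to $M=N$, i.e., \emph{every non-leaf of $T$ that is not in $V$ must already belong to $V_u$}.

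The main obstacle, and the step where maximality is essential, is this last reduction. I would argue by contradiction: suppose $w$ is a non-leaf of $T$ with $w\notin V_u$. Since $\deg_T(w)\geq 2$, the vertex $w$ has at least two neighbors in $T$, and the goal is to exhibit a neighbor $v$ such that $u\cup\{wv\}$ is still $F$-admissible, which would contradict maximality. Reducedness of $u\cup\{wv\}$ is straightforward to analyze via the star-forest picture: $T[u\cup\{wv\}]$ fails to be a star forest only when $v$ is itself a star-leaf of a star of $T[u]$ with $\geq 2$ edges. Using that $w$ has several neighbors, one can always pick $v$ to avoid this obstruction (for instance a non-incident neighbor, an isolated vertex, or a star-center among $w$'s neighbors). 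For such a choice, any new gap in $u\cup\{wv\}$ must involve the inserted edge $wv$, and a careful case analysis against Definition~\ref{gap-defn}---using Remark~\ref{ongaps}(b) to control the bridge structure, together with the fact that no monomial of $u$ contains $w$---shows that for a suitable $v$ no new gap is introduced. This yields the required extension of $u$, contradicts maximality, and completes the proof.
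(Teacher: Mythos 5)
Your structural reduction is sound, and in fact cleaner than the paper's bookkeeping: reducedness of $u$ is indeed equivalent to the subgraph $T[u]$ being a star forest, the blocks are the stars with centres the selected indices $V$, and $\vert u\vert=\vert V_u\setminus V\vert$, so Lemma~\ref{max-u} does reduce to showing that every non-leaf of $T$ outside $V$ lies in $V_u$. The genuine gap is in your final step, and it is not a matter of supplying the omitted case analysis: the claim that a suitable neighbour $v$ of an uncovered non-leaf $w$ can always be found is \emph{false}. If every neighbour of $w$ is a star-leaf of a star of $T[u]$ with at least two edges, then for every neighbour $v$ the symbol $u\cup\{wv\}$ fails to be reduced; since reducedness is inherited by subsymbols, no $F$-admissible symbol properly contains $u$, yet $w\notin V_u$. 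Concretely, take $T$ to be the path on the vertices $0>1>\dots>6$ and $u=(01,12,45,56)$, obtained by selecting the indices $1$ and $5$. This $u$ is reduced, has no gaps (no edge of $T$ joins $\{0,1,2\}$ to $\{4,5,6\}$, so there are no bridges between the two stars), and is inclusion-maximal, because any proper superset contains $23$ or $34$ and is then non-reduced ($12$ divides $\lcm(01,23)$, resp.\ $45$ divides $\lcm(34,56)$). Here $3$ is an uncovered non-leaf, $\vert u\vert=4$, while the right-hand side of the formula is $2+3=5$. So under the inclusion-maximal reading of ``maximal'', the statement you need in the last step---and with it the displayed identity---simply fails.

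You should be aware that this is precisely the point the paper's own proof glosses over: it asserts that a non-selected block sharing no vertex with any selected block ``cannot happen, otherwise adding the block to $u$ would produce a larger symbol''. In the example above, adding the block $[3]$ forces the cancellation of $12$ and $45$ at step~(III) and yields a \emph{different} symbol of the same length, not a larger one containing $u$. A correct argument therefore needs a different hypothesis on $u$ (for instance, that the index set chosen at step~(I) is a maximal independent set of $K$-subgraph indices), not inclusion-maximality, and both your proof and the paper's would have to be reorganized around that. As written, the concluding step of your proposal does not go through, and no amount of care in choosing $v$ will repair it.
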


\begin{proof}
Let $u$ be a maximal $F$-admissible symbol. We show that $u$ has length equal to the right-hand side of the formula in the statement.

Let $(a)$ be a block of $u$ that has been selected at step (I) and let $ab$ be a free vertex of the block $(a)$ in the dual graph of $T$, i.e. $b$ is a leaf of $T$. Then the monomial $ab$ is not canceled at step (III) since no monomial divisible by $a$ or $b$ appears in any of the following blocks $(c)$, with $a>c$.

Now, let $(a)$ be a block not in $u$, i.e. that has not been selected at step (I). Let $\mu_1,\dots,\mu_k$ be the vertices of the block $(a)$ that belong also to some other block that has been selected at step (I), say, $\mu_i$ belongs to the block $(b_i)$, i.e. $\mu_i=ab_i$ for $i=1,\dots,k$, with $b_1>b_2>\cdots>b_k$. (Notice that if a vertex is not free and belongs to some block of $u$, then it also belongs to some non-selected block.) Then $ab_1,\dots,ab_{k-1}$ are removed at step (III), while $ab_k$ is not canceled (notice that $ab_k$ is not a free vertex of the dual graph, since it belongs to two blocks).

The case in which all vertices of the block $(a)$ do not belong to any block of $u$ cannot happen, otherwise the symbol $u$ would not be maximal since adding the block $(a)$ to $u$ would produce a larger symbol.
\end{proof}

\begin{rem}
Notice that the characterization in Lemma \ref{max-u} identifies the maximal $F$-admissible symbols for $T$ with the  \textit{strongly disjoint set of bouquets} of $T$ introduced by Kimura in \cite[Definition 2.3]{K12}.  According to \cite[Theorem 4.1]{K12}, the maximum number of their \textit{flowers} gives the projective dimension of $R/I(T)$. In fact, the indices of the blocks of the maximal $F$-admissible symbols correspond to the roots of the bouquets and the other vertices (different from the roots) of the monomials remaining after the steps (I)-(V) correspond to the flowers of the bouquets. In particular, step (III) in the procedure above, together with the absence of gaps, ensures that the bouquets are pairwise strongly disjoint.
\end{rem}

From the previous remark and \cite[Theorem 4.1]{K12} it follows that:

\begin{cor}
Let $T$ be a tree. Then for every $i,j$, $\beta_{i,i+j}(R/I(T))$ is the number of subsets $W \subseteq V$ such that the induced subgraph $T_W$ contains an $F$-admissible symbol of length $i$ consisting of $j$ blocks.
\end{cor}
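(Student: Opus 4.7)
The plan is to trace the combinatorial data of an $F$-admissible symbol through the minimal resolution of Theorem~\ref{main} and read off the multigraded Betti numbers. By that theorem, the $i$th free module of the minimal resolution of $R/I(T)$ has one generator per $F$-admissible symbol of length $i$. Combined with Lemma~\ref{divisibility}, which shows that distinct $F$-admissible symbols have distinct multidegrees, and the subsequent remark that nonzero multigraded Betti numbers lie in squarefree degrees, this gives $\beta_{i,\mathbf{x}_W}(R/I(T))\in\{0,1\}$, with value $1$ exactly when some (necessarily unique) $F$-admissible symbol of length $i$ has $\lcm$ whose support is $W$.

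Next I would compute the degree of $\lcm(u)$ for an $F$-admissible symbol $u$ with $j$ blocks. By Remark~\ref{disjoint}, monomials belonging to distinct blocks are pairwise coprime, so the lcm factors blockwise. A block of index $(a)$ containing $k$ monomials $x_ax_{b_1},\dots,x_ax_{b_k}$ contributes the factor $x_a x_{b_1}\cdots x_{b_k}$ of degree $k+1$, whence
\begin{equation*}
\deg\lcm(u)\;=\;\sum_{\ell=1}^{j}(k_\ell+1)\;=\;i+j,
\end{equation*}
where $\sum_\ell k_\ell = i$ is the length of $u$. Consequently an $F$-admissible symbol of length $i$ contributes to $\beta_{i,i+j}$ precisely when it has $j$ blocks. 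Summing $\beta_{i,\mathbf{x}_W}$ over $W$ of cardinality $i+j$ then identifies $\beta_{i,i+j}(R/I(T))$ with the number of $W\subseteq V$ for which $T$ admits an $F$-admissible symbol of length $i$ with $j$ blocks whose lcm has support $W$.

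The remaining step is to recast this condition in terms of the induced subgraph $T_W$, i.e.\ as ``$T_W$ contains an $F$-admissible symbol of length $i$ consisting of $j$ blocks''. This is where I expect the main difficulty: $F$-admissibility, as defined by the procedure (I)--(V), refers explicitly to ranks and predecessors in the ambient tree. The way around this is Proposition~\ref{reduced-gaps}, which characterises $F$-admissibility purely by reducedness plus the absence of gaps. Reducedness is an intrinsic divisibility condition on the monomials of $u$, and a gap only involves vertices occurring in $u$ together with a bridge edge between two of them. Since every vertex of $u$ lies in $W$ and the edges of $T$ between vertices of $W$ are exactly the edges of $T_W$, after choosing a root of $T_W$ compatible with the fixed root of $T$ both conditions transfer verbatim between $T$ and $T_W$, yielding the desired reformulation and completing the proof.
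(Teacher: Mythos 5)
Your argument is correct in substance, but it takes a genuinely different route from the paper. The paper does not prove this corollary from its own resolution at all: it obtains it by identifying the blocks of $F$-admissible symbols with Kimura's strongly disjoint sets of bouquets (roots $=$ block indices, flowers $=$ the remaining vertices) and then citing \cite[Theorem 4.1]{K12}. You instead derive the formula internally, from Theorem \ref{main} and Lemma \ref{divisibility}: minimality gives $\beta_{i,\mathbf a}=\#\{F\text{-admissible }u,\ |u|=i,\ \gr(u)=\mathbf a\}$, Lemma \ref{divisibility} makes this $0$ or $1$, and the blockwise coprimality of Remark \ref{disjoint} gives $\deg\lcm(u)=i+j$ for a symbol with $i$ edges and $j$ blocks. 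This buys a self-contained proof that is independent of Kimura's results and, as a bonus, shows that the number of blocks is the honest invariant controlling the second grading. Two points deserve a word more than you give them. First, you tacitly use that the number of blocks of an $F$-admissible symbol is well defined despite the ambiguity of index assignment noted after Remark \ref{disjoint}; this is fine because, by reducedness, the blocks are exactly the maximal sets of pairwise non-coprime monomials (stars), so their number is intrinsic. Second, your reading of ``contains'' as ``has support exactly $W$'' is the only one under which the count equals $\beta_{i,i+j}$ (with the literal reading every superset of a valid support would also be counted), and your transfer to $T_W$ is sound but terse: one should note that all vertices entering the reducedness and gap conditions for a given pair of monomials lie in a single component of $T_W$, and that rooting each component at its vertex nearest the root of $T$ restricts the predecessor relation and the relevant order, so Proposition \ref{reduced-gaps} applies verbatim in $T_W$.
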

We can also recover Jacques'  recursive formulas for the graded Betti numbers of a tree \cite[Theorem 9.3.15]{J}, and for the projective dimension \cite[Theorem 9.4.17]{J}, for which we can present a new, short, elementary, non-homological proof. We first retrieve the necessary notation, which is the one introduced in \cite{J} on page 96, under (9.1.2). In \cite[Proposition 9.1.1]{J} it is proven that any tree $T$ contains  a vertex $v$ such that among its neighbours $v_1,\dots, v_n$ at most one (say $v_n$) has degree greater than 1.  Call $w_1, \dots, w_m$ the neighbours of $v_n$ other than $v$. Then call $T'$ the subgraph of $T$ obtained by eliminating the vertex $v_1$, and $T''$ the one obtained by eliminating all vertices $v, v_1, \dots, v_n$.  We can know prove Jacques' results.
\begin{thm} \cite[Chapter 9]{J} For all indices $r,d$ we have
$$\beta_{r,d}(R/I(T))= \beta_{r,d}(R/I(T'))+ \sum_{j=0}^{n-1}{{n-1}\choose{j}}\beta_{r-(j+1), d-(j+2)}(R/I(T'')).$$
Moreover,
$$\pd(R/I(T))=\max\{\pd(R/I(T')), \pd(R/I(T''))+n\}.$$
\end{thm}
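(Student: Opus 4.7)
The plan is to prove the graded Betti number formula; the projective dimension identity then follows since $\pd(R/I(T))=\max\{r:\beta_r(R/I(T))\neq 0\}$ and the recursion shows this maximum equals $\max\{\pd(R/I(T')),\,\pd(R/I(T''))+n\}$, the latter alternative attained at $j=n-1$. By Lemma~\ref{divisibility}, $\beta_{r,d}(R/I(T))$ counts the vertex subsets $W\subseteq V(T)$ of size $d$ that occur as the vertex set of some $F$-admissible symbol of $T$ of length $r$. I would split these $W$ according to whether $v_1\in W$. When $v_1\notin W$, no edge $vv_1$ can appear in such a symbol, which is then $F$-admissible in $T'$ as well; conversely any $F$-admissible symbol of $T'$ remains so in $T$, since a new gap would require $v_1$ or $vv_1$ to witness the bridge/gap conditions, which is impossible when $v_1\notin\mathrm{vert}(u)$. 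This subcase contributes exactly $\beta_{r,d}(R/I(T'))$.

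When $v_1\in W$, the vertex $v$ is forced into $W$ because $vv_1$ is the unique edge incident to $v_1$ and must belong to $u$. Setting $S=W\cap\{v_2,\dots,v_n\}$ and $W''=W\cap V(T'')$, one has $W=\{v,v_1\}\cup S\cup W''$, with $|S|=j$ freely chosen in $\binom{n-1}{j}$ ways and $|W''|=d-j-2$. The central claim is that $W$ supports an $F$-admissible symbol of length $r$ in $T$ if and only if $W''$ supports one of length $r-j-1$ in $T''$; summing over $j$ then recovers the formula. For the forward direction, given $F$-admissible $u$ on $W$, let $A\subseteq u$ be the monomials involving a vertex of $\{v,v_1,\dots,v_n\}$, namely certain $vv_i$'s together with possibly some $v_nw_k$'s, and set $u''=u\setminus A$. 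The structural key is that no bouquet of $u$ can be rooted at $v_n$ with only $w_k$-flowers: any such configuration would produce the unavoidable gap $vv_1>v_nw_k$ with bridge $vv_n$, since no smaller monomial of $u$ carries $v_1$ and no other monomial of $u$ carries the endpoint $w_k$. Consequently every $v_nw_k\in u$ belongs to a bouquet rooted at $w_k$ containing an extra ``anchor'' monomial $w_k\alpha\in u''$; this forces $|A|=j+1$ (so $|u''|=r-j-1$) and simultaneously blocks any gap of $u''$ that would otherwise be exposed by the removal of the transition edge. Reducedness of $u''$ is inherited from $u$, and its vertex set is $W''$ by construction, so $u''$ is $F$-admissible in $T''$ with the required parameters.

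For the reverse direction, one reconstructs $u$ from $u''$ by setting $u=\{vv_1\}\cup\{vv_i:i\in S\}\cup u''$ when $v_n\notin S$, and when $v_n\in S$ by attaching to $\{vv_1\}\cup\{vv_i:i\in S\setminus\{v_n\}\}\cup u''$ either the monomial $vv_n$ or a canonical transition edge $v_nw_{k^*}$, with $w_{k^*}$ chosen so that $v_nw_{k^*}$ is the monomial-order-smallest edge joining $v_n$ to a root of a bouquet of $u''$. The main obstacle is the structural lemma that, in the case $v_n\in S$, exactly one of these two options produces an $F$-admissible symbol: adding $vv_n$ succeeds precisely when every neighbor of $v_n$ in $W''$ is a flower of some bouquet of $u''$ of size at least two (so each potential gap $vv_n>w_k\alpha$ is blocked by a further monomial carrying the root $\alpha$), whereas adding $v_nw_{k^*}$ succeeds precisely when at least one such neighbor is a root (supplying the anchor monomial demanded by the forward analysis). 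These two conditions are complementary, and once $w_{k^*}$ is fixed by the canonical choice one verifies directly that the resulting $u$ is reduced, gap-free, and has vertex set $W$ and length $r$. This establishes the bijection and yields the desired formula.
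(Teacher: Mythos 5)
Your top-level decomposition is the same as the paper's: split the $F$-admissible symbols (equivalently, by Lemma \ref{divisibility}, the supporting multidegrees) according to whether $vv_1$ appears, identify the first class with the $F$-admissible symbols of $T'$, and strip the edges around $v$ from the second class to land in $T''$. The genuine gap is that you never fix the vertex ordering, and $F$-admissibility is not an intrinsic notion: the definition of a gap depends on the lexicographic order coming from a choice of root. All of your structural claims hinge on the comparison $vv_1>v_nw_k$, which holds if the tree is rooted at $v$ but fails under other admissible orderings; under the ordering the paper actually chooses (the one making $v_n>v>v_1>\cdots>v_{n-1}$ the \emph{smallest} vertices) the inequality reverses, the gap becomes $w_kv_n>vv_1$ with bridge $v_nv$, and then no ``anchor'' can block it: the relevant condition is that no monomial \emph{smaller than} $vv_1$ is divisible by $w_k$, which holds automatically because every edge at $w_k$ other than $v_nw_k$ has both variables larger than $v$. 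This is exactly the paper's trick: with that ordering an $F$-admissible symbol containing $vv_1$ can contain no $v_nw_k$ at all (reducedness kills the case $vv_n\in u$, the gap kills the case $vv_n\notin u$), so the correspondence collapses to the transparent $u=u'\sqcup\{vv_1\}\sqcup V$ with $V\subseteq\{vv_2,\dots,vv_n\}$, and there are no transition edges, anchors, or dichotomies to analyse.

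Because you implicitly root at $v$ instead, you inherit precisely the complications that choice of ordering was designed to avoid, and these are the places where your argument is asserted rather than proved: (i) that removing the transition edge $v_nw_k$ cannot expose a gap in $u''$ (true, but it needs the observation that the predecessor of $w_k$ is $v_n\notin V(T'')$, so $w_k$ can never play the role of the vertex $w$ in a gap of $u''$); (ii) that $u$ contains at most one monomial $v_nw_k$, which is what actually forces $\vert A\vert=j+1$; and (iii) the entire reverse dichotomy, namely that exactly one of ``insert $vv_n$'' and ``insert $v_nw_{k^*}$'' yields an $F$-admissible symbol, and that the \emph{smallest} root is the correct canonical choice (a larger root $w_k$ would create the gap $v_nw_k>w_{k'}\alpha$ against any smaller root $w_{k'}$ -- note that Lemma \ref{divisibility} forbids two distinct $F$-admissible symbols on the same support, so at most one choice can work and you must identify which). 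All of these can be verified, so your route is salvageable, but as written the hardest steps are exactly the ones labelled ``one verifies directly''. Adopting the paper's ordering reduces the whole argument to one reducedness check and one gap computation.
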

\begin{proof}
We fix, on the vertices of $T$, an ordering with respect to which the smallest vertices are $v_n>v>v_1>\cdots>v_{n-1}$. Consider the set of $F$-admissible symbols for $T$ having length $r$ and degree $d$. It includes the set of all $F$-admissible symbols for $T'$ having length $r$ and degree $d$. The complementary set is formed by the $F$-admissible symbols containing the monomial $vv_1$. Let $u$ be such a symbol. Then $u'=u\setminus\{v_nv, vv_1,\dots, vv_{n-1}\}$ (if non-empty) is an $F$-admissible symbol for $T''$. Note that it is certainly reduced, and the elimination of $v_nv$ could produce a gap only for $n>1$ and if $u$ contained some monomial of the form $w_iv_n$. But $u$ cannot contain both $w_iv_n$ and $v_nv$ because by assumption it contains $vv_1$. Conversely, if $u'$ is an $F$-admissible symbol for $T''$ of length $r-(j+1)$ with $0\leq j\leq n-1$, then, for every $j$-subset $V$ of $\{vv_2,\dots, vv_n\}$, $u=u'\cup\{vv_1\}\cup V$ is an $F$-admissible symbol for $T$ of length $r$. Note that $u$ is reduced (because none of the vertices of the monomials added is a vertex of $T''$) and, moreover, if $n>1$ and some monomial of the form $w_ia$ appears in $u'$, then the presence of $vv_1$ prevents $w_ia$ and $v_nv$ from forming a gap. The symbol $u$ has degree $d$ if and only if $u'$ has degree $d-(j+2)$. Hence, given an $F$-admissible symbol $u'$ for $T''$ of length $r-(j+1)$ and degree $d-(j+2)$, we can construct the $F$-admissible symbol $u$ in ${n-1}\choose{j}$ different ways (as many as the possible choices of $V$). This shows the first identity. For the second identity it suffices to observe that the $F$-admissible symbol $u$ has maximum length if it is of maximum length among those not containing $vv_1$ or it is obtained by adding the whole set $\{v_nv, vv_1,\dots, vv_{n-1}\}$ to an $F$-admissible symbol $u'$ for $T''$ having maximum length.
\end{proof}

\section*{Acknowledgements}
The first author is gratefully indebted to Giandomenico Boffi for raising the question and for useful discussions. Both authors thank Volkmar Welker for his valuable hints on discrete Morse theory.

\end{document}